\documentclass[11pt]{amsart}
\usepackage{amsfonts,amsmath,amsthm,mathrsfs}
\newtheorem{thm}{Theorem}[section]
 \newtheorem{cor}[thm]{Corollary}
 \newtheorem{lem}[thm]{Lemma}{\rm}

 {\rm}

\numberwithin{equation}{section}

\def\x{\mathbf{x}}
\def\f{\mathbf{f}}

\def\y{\mathbf{y}}

\def\v{\mathbf{v}}
\def\K{\mathbf{K}}
\def\M{\mathbf{M}}
\def\B{\mathbf{B}}

\def\R{\mathbb{R}}
\def\N{\mathbb{N}}
\def\D{\mathbf{D}}

\def\bxi{\boldsymbol{\xi}}
\def\bsig{\boldsymbol{\sigma}}

\begin{document}
\title{The Moment-SOS hierarchy and the Christoffel-Darboux kernel}
\thanks{Work partly funded by the AI Interdisciplinary Institute ANITI through the French ``Investing for the Future PI3A" program under the Grant agreement ANR-19-PI3A-0004}
\author{Jean B. Lasserre}
\address{LAAS-CNRS and Institute of Mathematics\\
University of Toulouse\\
LAAS, 7 avenue du Colonel Roche\\
31077 Toulouse C\'edex 4, France\\
email: lasserre@laas.fr}

\maketitle

\begin{abstract}
We consider the global minimization of a polynomial on a compact set $\B$.
We  show that each step of the Moment-SOS hierarchy has a nice and simple interpretation that complements the usual one. 
Namely,
it computes coefficients of a polynomial in an orthonormal basis of $L^2(\B,\mu)$ where $\mu$ is an arbitrary  reference measure whose support is exactly $\B$. 
The resulting polynomial is a certain density (with respect to 
$\mu$) of some signed measure on $\B$. 
When some relaxation is exact (which generically takes place)
the coefficients of the optimal polynomial density are values of orthonormal polynomials at the global minimizer
and the optimal (signed) density is simply related to the Christoffel-Darboux (CD) kernel and the Christoffel function associated with $\mu$.
In contrast to the hierarchy of upper bounds which computes \emph{positive} densities, the global optimum can be achieved exactly as integration against a polynomial (signed) density because the CD-kernel is a reproducing kernel,
and so can mimic a Dirac measure (as long as finitely many moments are concerned).
\end{abstract}

\section{Introduction}

Consider the \emph{Polynomial Optimization Problem} (POP):
\[f^*\,=\,\min_\x \{\,f(\x):\: \x\in \B\,\},\]
where $\B\subset\R^n$ is a compact basic semi-algebraic set.
For the hierarchy of upper bounds discussed below, $\B$ is restricted to be a ``simple" set like e.g. a box, an ellipsoid, a simplex, a discrete-hypercube, or their
image by an affine transformation. Indeed,
to define an SOS-hierarchy of upper bounds converging to the global minimum $f^*$ 
as described in e.g. \cite{etienne-monique-2020,new-look,slot-monique-2020}, we use a measure $\mu$ whose support is exactly $\B$, and
for which all moments 
\[\mu_\alpha:=\int_\B \x^\alpha\,d\mu\,,\quad\alpha\in\N^n\,,\]
can be obtained numerically or in closed-form. For instance if $\B$ is a box, an ellipsoid or a simplex,
$\mu$ can chosen to be the Lebesgue measure restricted to $\B$. On the hypercube
$\{-1,1\}^n$ $\mu$ one may choose for $\mu$ the counting measure, etc.

\subsection{Background} 
Let $\B\subset\R^n$ be the basic semi-algebraic set defined by
\begin{equation}
\label{set-B}
\B\,=\{\,\x\in\R^n\::\: g_j(\x)\geq0\,\,\quad j=1,\ldots,m\,\}\,,
\end{equation}
for some polynomials $g_j\in\R[\x]$, $j=1,\ldots,m$. Let $g_0(\x)=1$ for all $\x$, and let
$d_j:=\lceil{\rm deg}(g_j)/2\rceil$, $j=0,\ldots,m$. Define $\Sigma[\x]_t$ be the set of sums-of-squares (SOS) polynomials of degree at most $2t$.

\subsection*{A hierarchy of lower bounds}
To approximate $f^*$ from below, consider the hierarchy of semidefinite programs indexed by $t\in\N$:
\begin{equation}
\label{eq:lower}
\rho_t=\sup_{\lambda\,\sigma_j}\,\{\,\lambda\::\: f-\lambda=\sum_{j=0}^m\psi_j\,g_j\,;\quad \psi_j\in\Sigma[\x]_{t-d_j}\,,\:j=0,\ldots,m\,\}\,,
\end{equation}
where $\Sigma[\x]_t$ denotes the space of sum-of-squares (SOS) polynomials of degree at most $2t$.
Under some Archimedean assumption on the $g_j$'s, 
$\rho_t\leq f^*$ for all $t$ and the sequence 
of lower bounds $(\rho_t)_{t\in\N}$ is monotone non decreasing and converges to $f^*$ as $t$ increases. Moreover, 
by a result of Nie \cite{nie}, its convergence is finite generically, and global minimizers can be extracted from an optimal solution of the 
semidefinite program which is the dual of \eqref{eq:lower}; see e.g. \cite{lass-book}. The sequence of semidefinite programs \eqref{eq:lower} and their duals, both indexed by $t$,
forms what is called the Moment-SOS hierarchy initiated in the early 2000's. For more details 
on the Moment-SOS hierarchy and its numerous applications in and outside optimization, the interested reader is referred to \cite{lass-siopt-01,lass-book}.

\subsection*{A hierarchy of upper bounds}

Let $\mu$ be a finite Borel measure whose support is exactly $\B$, where now 
$\B$ is a ``simple" set as mentioned earlier. (Hence all moments of $ \mu$ are available in closed form.)
To approximate $f^*$ from above, consider the hierarchy of semidefinite programs
\begin{equation}
\label{eq:upper}
u_t=\inf_{\sigma}\,\{\,\displaystyle \int_\B f\,\sigma\,d\mu\::\: \int_\B \sigma\,d\mu\,=\,1\,;\: \sigma\in\Sigma[\x]_t\,\}\,.
\end{equation}

That $u_t\geq f^*$ is straightforward since 
\[f\geq f^*\mbox{ on $\B$ }\Rightarrow \int_\B f\,\sigma\,d\mu\,\geq\,f^*\,\int_\B \sigma\,d\mu\,=\,f^*\,,\]
for any feasible SOS $\sigma$.
In \cite{new-look} it was proved that $u_t\downarrow f^*$ as $t$ increases, and in fact solving 
the dual of \eqref{eq:upper} is solving a generalized eigenvalue problem for a certain pair of real symmetric matrices. 
In a series of papers, de Klerk, Laurent an co-workers have provided several
rates of convergence of $u_t\downarrow f^*$ for several examples of sets $\B$.
For more details and results, the interested reader is referred to \cite{etienne-monique-2020,slot-monique-2020,slot-monique-2020-2,slot-monique-2020-3} and references therein.\\

The meaning of \eqref{eq:upper} is clear if one recalls that
\begin{equation}
\label{eq:f-star-measure}
f^*\,=\,\inf_\phi\,\{\,\int_\B f\,d\phi\::\: \phi(\B)=1\,;\:\phi\in\mathscr{M}(\B)_+\,\},
\end{equation}
where $\mathscr{M}(\B)_+$ is the space of \emph{all} finite Borel measures on $\B$.
Indeed in \eqref{eq:upper} one only considers the (restricted) subset of probability measures on $\B$ that have
a density (an SOS of degree at most $2t$) with respect to $\mu$ whereas in \eqref{eq:f-star-measure} one considers \emph{all}
probability measures on $\B$. In particular, the Dirac measure $\phi:=\delta_{\bxi}$ at any global
minimiser $\bxi\in\B$ belongs to $\mathscr{M}(\B)_+$ but does \emph{not} have a density with respect to $\mu$, 
which explains why the convergence $u_t\downarrow f^*$ as $t$ increases, can be only asymptotic and not finite; an exception
is  when $\B$ is a finite set (e.g. $\B=\{-1,1\}^n$ and $\mu$ is the counting measure).

\subsection{Contribution}

Our contribution is to show that in fact the dual of the semidefinite program
\eqref{eq:lower} for computing the lower bound $\rho_t$ has also an
interpretation of the same flavor as \eqref{eq:upper} where one now considers
\emph{signed} Borel measures $\phi_t$ with a distinguished \emph{polynomial} density with respect to $\mu$. Namely, 
the dual of \eqref{eq:lower} minimizes $\int_\B fd\phi_t$ over signed measures $\phi_t$ of the form:
\begin{equation}
\label{eq:form}
d\phi_t(\x)\,=\,\sigma_t(\x)\,d\mu(\x)\,=\,\left(\sum_{\vert\alpha\vert\leq 2t}\sigma_\alpha\,T_\alpha(\x)\right)\,d\mu(\x)\,,
\end{equation}
where :

-  $(T_\alpha)\subset\R[\x]$ is a family of polynomials that are orthonormal with respect to $\mu$, and

- the coefficients $\bsig_t=(\sigma_\alpha)_{\alpha\in\N^n_{2t}}$ of the polynomial $\sigma_t\in\R[\x]_{2t}$ satisfy the usual semidefinite constraints that are necessary for $\bsig_t$ to be 
\emph{moments} of a measure on $\B$.

Eventually for some $t\in\N$, $\bsig_t$  satisfies:
\begin{equation}
\label{ideally}
\sigma_\alpha\,=\,T_\alpha(\bxi)\,=\,\int_\B T_\alpha(\x)\,\delta_{\bxi}(d\x)\,,\quad \vert\alpha\vert\,\leq\,2t\,,
\end{equation}
where $\bxi$ is an arbitrary global minimizer and $\delta_{\bxi}$ is the Dirac measure at $\bxi\in\B$. Indeed then
\[\int_\B f(\x)\,d\phi_t(\x)\,:=\,\int_\B f(\x)\,\sum_{\vert\alpha\vert\leq 2t}T_\alpha(\bxi)\,T_\alpha(\x)\,d\mu(\x)\,=\,f(\bxi)\,,\]
because the Christoffel-Darboux Kernel $K_t(\x,\y):=\sum_{\vert\alpha\vert\leq 2t}T_\alpha(\x)\,T_\alpha(\y)$ is a \emph{reproducing} kernel 
for $\R[\x]_{2t}$, considered to be a finite-dimensional subspace of the Hilbert space $L^2(\B,\mu)$. Moreover,
$\sigma_t(\bxi)^{-1}$ is nothing less than the Christoffel function evaluated at the global minimizer $\bxi$ of $f$ on $\B$.

\emph{As a take home message and contribution of this paper,
it turns out that the dual
of the step-$t$ semidefinite relaxation \eqref{eq:lower} is a semidefinite program that computes the coefficients $\bsig_t=(\sigma_\alpha)$
of the polynomial density $\sigma_t$  in \eqref{eq:form}. In addition, when the relaxation is exact then $\sigma_t(\bxi)^{-1}$  is the Christoffel function of $\mu$, evaluated at a global minimizer $\bxi$ of $f$ on $\B$.}

Interestingly, in the dual of \eqref{eq:lower} there is no mention of the reference measure $\mu$. 
Only \emph{after} we fix some arbitrary reference measure $\mu$ on $\B$, we can interpret an optimal solution
as coefficients $\bsig_t$ of an appropriate polynomial density with respect to $\mu$.

So in both \eqref{eq:upper} and the dual of \eqref{eq:lower}, one searches for a polynomial ``density" with respect to $\mu$. 
In \eqref{eq:upper} one  searches for 
an SOS density (hence a  positive density) whereas in the dual of \eqref{eq:lower} one searches for a \emph{signed} polynomial density 
whose coefficients (in the basis of orthonormal polynomials)  are moments of a measure on $\B$ (ideally 
the Dirac at a global minimizer). 

The advantage of the (signed) polynomial density in \eqref{eq:form} compared to the 
(positive) SOS density in \eqref{eq:upper}, is to be able to obtain the global optimum
$f^*$ as the integral of $f$ against this density, which is impossible with the SOS density of \eqref{eq:upper}.

At last but not least, this interpretation  establishes another (and rather surprising) simple link between 
polynomial optimization (here the Moment-SOS hierarchy), the Christoffel-Darboux kernel and  the Christoffel function, 
fundamental tools in the theory of orthogonal polynomials and the theory of approximation. 
Previous contributions in this vein  include
\cite{lass-2020} to characterize upper bounds \eqref{eq:upper}, \cite{etienne-monique-2020,slot-monique-2020,slot-monique-2020-2} 
to analyze their rate of convergence to $f^*$, and the more recent \cite{slot-monique-2020-3} for rate of convergence of both upper and lower bounds on $\B=\{0,1\}^n$.

\section{Main result}

\subsection{Notation and definition}
Let $\R[\x]=\R[x_1,\ldots,x_n]$ be  the ring of real polynomials in the variables $x_1,\ldots,x_n$ and let $\R[\x]_t\subset\R[\x]$ be  its subspace of polynomials of degree at most $t$. Let $\N^n_t:=\{\alpha\in\N^n:\vert\alpha\vert\leq t\}$ where 
$\vert\alpha\vert=\sum_i\alpha_i$. For an arbitrary 
Borel subset $\mathcal{X}$ of $\R^n$, denote by $\mathscr{M}(\mathcal{X})_+$ the convex cone of finite Borel measures 
on $\mathcal{X}\subset\R^n$, and by $\mathscr{P}(\mathcal{X})$ is subset of probability measures on $\mathcal{X}$..

\subsection{Moment and localizing matrices}
Given an sequence $\y=(y_\alpha)_{\alpha\in\N^n}$ and polynomial
$g\in\R[\x]$,  $\x\mapsto g(\x):=\sum_\gamma g_\gamma\,\x^\gamma$, the \emph{localizing} matrix $\M_t(g\,\y)$ associated with $g$ and $\y$ is th real symmetric matrix with rows and columns indexed by 
$\alpha\in\N^n_t$ and with entries
\begin{equation}
\label{eq:localizing}
\M_t(g\,\y)(\alpha,\beta)\,:=\,\sum_\gamma g_\gamma\,y_{\alpha+\beta+\gamma}\,,\quad\alpha,\beta\in\N^n_t\,.
\end{equation}
If $g(\x)=1$ for all $\x$ then $\M_t(g\,\y)\,(=\M_t(\y))$ is called the \emph{moment} matrix.

A sequence $\y=(y_\alpha)_{\alpha\in\N^n}$ has a representing measure if there exists
a (positive) finite Borel measure $\phi$ on $\R^n$ such that
$y_\alpha=\int\x^\alpha\,d\phi$ for all $\alpha\in\N^n$.

If $\y$ has a representing measure supported on $\{\x:g(\x)\geq0\}$ then
$\M_t(\y)\succeq0$ and $\M_t(g\,\y)\succeq0$ for all $t\in\N$. The converse is not true in general; however, the following important result
is at the core of the Moment-SOS hierarchy.
\begin{thm}(Putinar \cite{putinar93})
\label{th-put}
Let $g_j\in\R[\x]$, $j=0,\ldots,m$ with $g_0(\x)=1$ for all $\x$, and
let $G:=\{\x\in\R^n: g_j(\x)\geq0,\,\:j=1,\ldots,m\,\}$ be compact. Moreover, assume that for some $M>0$, the quadratic polynomial $\x\mapsto M-\Vert\x\Vert^2$ can be written in the form $\sum_{j=0}^m\psi_j\,g_j$,
for some SOS polynomials $\psi_0,\ldots\psi_m$. 

Then a sequence
$\y=(y_\alpha)_{\alpha\in\N^n}$ has a representing measure on $G$
if and only if  $\M_t(g_j\,\y)\succeq0$ for all $t\in\N$, and all $j=0,\ldots,m$.
\end{thm}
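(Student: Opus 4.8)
The plan is to prove the nontrivial (``if'') direction by the operator-theoretic argument of Putinar; the converse is immediate, since if $\phi$ is a representing measure on $G$ then for any $p\in\R[\x]_t$ with coefficient vector $\mathbf{p}$ one has $\mathbf{p}^\top\M_t(g_j\,\y)\,\mathbf{p}=\int_G p^2\,g_j\,d\phi\ge0$ because $g_j\ge0$ on $G$. For the forward direction, first pass to the Riesz functional $L\colon\R[\x]\to\R$ defined by $L(\x^\alpha):=y_\alpha$ and extended linearly. The hypothesis that $\M_t(g_j\,\y)\succeq0$ for all $t$ and all $j$ is exactly the statement that $L(g_j\,p^2)\ge0$ for every $p\in\R[\x]$ and every $j=0,\ldots,m$; in particular (taking $j=0$) the symmetric bilinear form $\langle p,q\rangle:=L(pq)$ is positive semidefinite on $\R[\x]$. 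Let $N:=\{p:\langle p,p\rangle=0\}$; by Cauchy--Schwarz $N$ is a subspace, and the quotient $\R[\x]/N$ carries a genuine inner product. I complete it to a Hilbert space $H$ and denote by $\hat p$ the image of $p$.

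The crucial step is to turn each multiplication map $\hat x_i\colon\hat p\mapsto\widehat{x_i p}$ into a bounded self-adjoint operator on $H$; this is where the Archimedean hypothesis enters. Writing $M-\|\x\|^2=\sum_{j=0}^m\psi_j\,g_j$ with each $\psi_j$ SOS, and applying $L$ to $(M-\|\x\|^2)\,p^2$, the localizing nonnegativity $L(g_j\,\sigma)\ge0$ for SOS $\sigma$ (applied to $\sigma=\psi_j p^2$) yields $M\,L(p^2)\ge L(\|\x\|^2\,p^2)=\sum_i L(x_i^2\,p^2)=\sum_i\|\hat x_i\hat p\|^2$. Hence each $\hat x_i$ is bounded with norm at most $\sqrt M$, symmetric, and these operators pairwise commute. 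By the spectral theorem for commuting bounded self-adjoint operators there is a joint spectral measure $E$ on $\R^n$, supported in the cube $[-\sqrt M,\sqrt M]^n$.

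Finally I set $\phi(\,\cdot\,):=\langle E(\,\cdot\,)\,\hat 1,\ \hat 1\rangle$, a finite positive Borel measure. Functional calculus gives $L(p)=\langle p(\hat x_1,\ldots,\hat x_n)\,\hat 1,\ \hat 1\rangle=\int p\,d\phi$ for every polynomial $p$, so $y_\alpha=\int\x^\alpha\,d\phi$ and $\phi$ represents $\y$. It remains to show $\mathrm{supp}(\phi)\subseteq G$: for each $j$ the localizing condition reads $\int g_j\,p^2\,d\phi\ge0$ for all $p\in\R[\x]$, and since polynomials are dense in $L^2(\phi)$ (the support being compact) this forces $g_j\ge0$ $\phi$-almost everywhere, so $\phi$ is carried by $G$.

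I expect the main obstacle to be the boundedness estimate together with the commuting spectral theorem: one must check that the $\hat x_i$ are well defined on the quotient (i.e.\ that $N$ is invariant under multiplication), that they are genuinely bounded so as to extend continuously to $H$, and that the joint spectral measure of the commuting family can be used to recover $L$ on all polynomials. It is precisely the Archimedean assumption $M-\|\x\|^2\in\sum_j\psi_j g_j$ that delivers the uniform norm bound $\sqrt M$ and hence makes this machinery applicable; without it the multiplication operators need not be bounded and the argument breaks down.
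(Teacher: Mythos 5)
The paper itself offers no proof of this statement: it is Putinar's solution of the $K$-moment problem, quoted from \cite{putinar93} as a black box on which the Moment-SOS hierarchy rests. So there is no in-paper argument to compare yours against; judged on its own, your proof is correct, and it is in substance the classical operator-theoretic argument of the cited reference. The easy direction is exactly right: $\mathbf{p}^\top\M_t(g_j\,\y)\,\mathbf{p}=L(g_j\,p^2)=\int_G g_j\,p^2\,d\phi\geq 0$. In the hard direction, the point you flag as delicate --- well-definedness of the multiplication operators on the quotient $\R[\x]/N$ --- is in fact settled by the very estimate you derive: applying $L$ to $(M-\Vert\x\Vert^2)\,p^2=\sum_j\psi_j\,g_j\,p^2$ and using $L(g_j\,\psi_j\,p^2)\geq0$ (each $\psi_j p^2$ is again SOS) gives $\sum_i L(x_i^2p^2)\leq M\,L(p^2)$; taking $p\in N$ shows $x_ip\in N$, and on the quotient it gives $\Vert\hat x_i\hat p\Vert\leq\sqrt{M}\,\Vert\hat p\Vert$. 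Symmetry and pairwise commutativity hold on the dense image of $\R[\x]$ and pass to the completion $H$ by continuity, so the joint spectral measure $E$ exists, is supported in $[-\sqrt M,\sqrt M]^n$, and $\phi:=\langle E(\cdot)\hat 1,\hat 1\rangle$ is a finite measure (of mass $y_0$) with $L(p)=\int p\,d\phi$ for all polynomials $p$. Your localization step is also sound: since $\mathrm{supp}(\phi)$ is compact, polynomials are dense in $L^2(\phi)$ by Stone--Weierstrass, so $\int g_j\,p^2\,d\phi\geq0$ for all $p$ forces $g_j\geq0$ $\phi$-almost everywhere, and continuity of $g_j$ upgrades this to $g_j\geq0$ on $\mathrm{supp}(\phi)$, i.e., $\mathrm{supp}(\phi)\subseteq G$. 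For completeness, note the alternative standard route: deduce the moment statement from Putinar's Positivstellensatz together with Haviland's theorem (if $p\geq 0$ on $G$ then $p+\varepsilon$ lies in the quadratic module for every $\varepsilon>0$, so $L(p)\geq 0$, and Haviland provides the measure). Your route avoids the Positivstellensatz entirely, which is exactly why the Archimedean hypothesis must be invoked directly through the norm bound $\sqrt{M}$ on the multiplication operators.
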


\subsection*{Orthonormal polynomials}

Let $\B\subset\R^n$ be the compact basic semi-algebraic set defined in 
\eqref{set-B} assumed to have a nonempty interior. Let $\mu$ be a finite Borel (reference) measure whose support is exactly $\B$ and with
associated sequence of orthonormal polynomials $(T_\alpha)_{\alpha\in\N^n}\subset\R[\x]$. That is
\[\int_\B T_\alpha\,T_\beta\,d\mu\,=\,\delta_{\alpha=\beta}\,,\quad\forall\alpha,\beta\in\N^n\,.\]
For instance, if $\B=[-1,1]^n$ and $\mu$ is the uniform probability distribution
on $\B$, one may choose for the family $(T_\alpha)$ the tensorized \emph{Legendre} polynomials.
Namely if $(T_j)\subset\R[x]$ is the family of univariate Legendre polynomials, then
\[T_\alpha(\x)\,:=\,\prod_{j=1}^nT_{\alpha_j}(x_j)\,,\quad \alpha\in\N^n\,.\]
For every $t\in\N$, the mapping $K_t:\B\times\B\to\R$, 
\[(\x,\y)\mapsto K_t(\x,\y)\,:=\,\sum_{\vert\alpha\vert\leq t} T_\alpha(\x)\,T_\alpha(\y)\,,\quad\x,\y\in\B\,\]
is called the Cristoffel-Darboux kernel associated with $\mu$. An important 
property of $K_t$ is to \emph{reproduce} polynomials of degree at most $t$, that is:
\begin{equation}
\label{observe}
p(\x)\,=\,\int_\B p(\y)\,K_t(\x,\y)\,d\mu(\y)\,\quad\forall \x\in\B\,,\quad\forall p\in\R[\x]_t\,.
\end{equation}
This is why $K_t$ is called a reproducing kernel, and 
$\R[\x]_t$ viewed as a 
finite-dimensional vector subspace of the Hilbert space $L^2(\B,\mu)$, is called a Reproducing 
Kernel Hilbert Space (RKHS). For more details on the theory of orthogonal polynomials, the interested reader is referred to
e.g. \cite{dunkl} and the many references therein.

\subsection{Main result}
\subsection*{An observation}
Let $f\in\R[\x]$ and let $t\geq {\rm deg}(f)=d_f$ be fixed. Let
$\mathscr{P}(\B)\subset\mathscr{M}(\B)_+$ be the space of probability measures on $\B$. Then
\begin{eqnarray*}
f^*&=&
\displaystyle\min_{\phi\in\mathscr{P}(\B)_+}\,\int_\B f\,d\phi\\
&=&
\displaystyle\min_{\phi\in\mathscr{P}(\B)_+}\,\int_\B \int_\B f(\y)\,K_t(\x,\y)\,d\mu(\y)\,d\phi(\x)\\
&=&
\displaystyle\min_{\phi\in\mathscr{P}(\B)_+}\,\int_\B f(\y)\,(\int_\B K_t(\x,\y)\,d\phi(\x)\,)\,d\mu(\y)\\
&=&
\displaystyle\min_{\phi\in\mathscr{P}(\B)_+}\,\int_\B f(\y)\,
\left[\sum_{\vert\alpha\vert\leq t}(\underbrace{\int_\B T_\alpha(\x)\,d\phi(\x)}_{\sigma_\alpha})\,T_\alpha(\y)\,\right]\,
d\mu(\y)\\
&=&
\displaystyle\min_{\phi\in\mathscr{P}(\B)_+}\,\int_\B f(\y)\,
(\underbrace{\sum_{\vert\alpha\vert\leq t}\sigma_\alpha\,T_\alpha(\y)}_{\sigma_t(\y)\in\R[\x]_t})\,
d\mu(\y)\,,
\end{eqnarray*}
where the second equality follows from Fubini-Tonelli  interchange theorem valid in this simple setting.
In other words, we have proved the following:
\begin{lem}
Let $\B\subset\R^n$ be as in \eqref{set-B} and let $\mu$ be a finite Borel (reference) measure whose support is exactly $\B$ and with
associated sequence of orthonormal polynomials $(T_\alpha)_{\alpha\in\N^n}$.
Let $f^*=\min\,\{f(\x):\x\in\B\}$. Then for every fixed $t\geq {\rm deg}(f)$:
\begin{equation}
\label{eq:equiv}
f^*\,=\,\displaystyle\inf_{\sigma\in\R[\x]_t}\,
\int_\B f(\y)\,\sigma(\y)\,d\mu(\y)\,,
\end{equation}
where the infimum is over all polynomials $\sigma\in\R[\x]_t$ of the form:
\begin{eqnarray}
\label{eq:a1}
\sigma(\x)&=&\sum_{\vert\alpha\vert\leq t}\sigma_\alpha\,T_\alpha(\x)\,,\quad\forall\x\in\B\,\quad\mbox{with}\\
\label{eq:a2}
\sigma_\alpha&=&\int_\B T_\alpha(\x)\,d\phi(\x)\,,\quad\forall\alpha\in\N^n_t\,,\quad
\mbox{for some $\phi\in\mathscr{P}(\B)$.}
\end{eqnarray}
\end{lem}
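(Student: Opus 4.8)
The plan is to start from the measure-LP reformulation of $f^*$ in \eqref{eq:f-star-measure}, namely $f^*=\min_{\phi\in\mathscr{P}(\B)}\int_\B f\,d\phi$, and transport the objective through the reproducing kernel $K_t$. Since $t\geq\deg(f)$, the polynomial $f$ lies in $\R[\x]_t$, so the reproducing identity \eqref{observe} yields $f(\x)=\int_\B f(\y)\,K_t(\x,\y)\,d\mu(\y)$ for every $\x\in\B$. Substituting this expression for $f$ inside $\int_\B f\,d\phi$ turns the objective into a double integral over $\B\times\B$.

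The second step is to interchange the order of integration. Because $\B$ is compact, $\mu$ and $\phi$ are finite, and the integrand $(\x,\y)\mapsto f(\y)\,K_t(\x,\y)$ is a polynomial (hence bounded and continuous on $\B\times\B$), Fubini--Tonelli applies and I may rewrite $\int_\B f\,d\phi=\int_\B f(\y)\big(\int_\B K_t(\x,\y)\,d\phi(\x)\big)\,d\mu(\y)$. Expanding $K_t(\x,\y)=\sum_{\vert\alpha\vert\leq t}T_\alpha(\x)\,T_\alpha(\y)$ and pulling the finite sum out of the inner integral identifies that inner integral as $\sum_{\vert\alpha\vert\leq t}\sigma_\alpha\,T_\alpha(\y)=\sigma(\y)$, where $\sigma_\alpha:=\int_\B T_\alpha\,d\phi$. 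Thus $\int_\B f\,d\phi=\int_\B f\,\sigma\,d\mu$ with $\sigma$ of the prescribed form \eqref{eq:a1}--\eqref{eq:a2}.

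The third and final step is to argue that the two infima coincide. The map $\phi\mapsto\sigma$ sends $\mathscr{P}(\B)$ exactly into the admissible set on the right-hand side of \eqref{eq:equiv} by definition \eqref{eq:a2}, and conversely every admissible $\sigma$ arises from some $\phi\in\mathscr{P}(\B)$; moreover the previous step shows the two objectives agree value-by-value along this correspondence. Hence $\inf_\sigma\int_\B f\,\sigma\,d\mu$ over admissible $\sigma$ equals $\min_{\phi\in\mathscr{P}(\B)}\int_\B f\,d\phi=f^*$.

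The computation is entirely mechanical once the reproducing property and Fubini are in hand, so I do not anticipate a genuine obstacle; on a compact set with finite measures and a polynomial integrand the interchange is immediate. The one point worth stating explicitly is that the correspondence $\phi\mapsto\sigma$ need not be injective, since many measures share the same truncated vector $(\sigma_\alpha)_{\vert\alpha\vert\leq t}$. This is harmless: the objective $\int_\B f\,d\phi$ depends on $\phi$ only through $\sigma$, so passing from measures to their densities neither loses nor gains any attainable objective value, and the \emph{minimum} on the measure side becomes an \emph{infimum} on the density side only because the minimizing Dirac $\delta_{\bxi}$ need not itself possess a density with respect to $\mu$.
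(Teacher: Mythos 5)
Your proposal is correct and follows essentially the same route as the paper: both start from $f^*=\min_{\phi\in\mathscr{P}(\B)}\int_\B f\,d\phi$, insert the reproducing identity \eqref{observe} for $f\in\R[\x]_t$, interchange integrals by Fubini--Tonelli, and identify the inner integral of the expanded kernel with the density $\sigma$ of \eqref{eq:a1}--\eqref{eq:a2}. Your added remarks on the surjectivity (and harmless non-injectivity) of the map $\phi\mapsto\sigma$ and on why the minimum becomes an infimum are sound clarifications of points the paper leaves implicit.
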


So solving \eqref{eq:equiv} is equivalent to searching for a signed measure
$\sigma\,d\mu$ with polynomial (signed) density $\sigma\in\R[\x]_t$ that satisfies
\eqref{eq:a1}-\eqref{eq:a2}.

\subsection{A hierarchy of relaxations of \eqref{eq:equiv}}
In this section we show the SOS-hierarchy defined in \eqref{eq:lower}
is the dual semidefinite program of a natural SDP-relaxation of
\eqref{eq:equiv}. In fact the only difficult constraint in
\eqref{eq:equiv} is \eqref{eq:a2} which  demands $\bsig$ to admit a representing probability measure $\phi$ on $\B$.\\

Let $\D_t$ be the lower triangular matrix for the change of basis of $\R[\x]_{2t}$ from the monomial basis
$\v_{2t}(\x)=(\x^\alpha)_{\alpha\in\N^n_{2t}}$ of $\R[\x]_{2t}$ to the  basis $(T_\alpha)_\alpha$, i.e.,
\begin{equation}
\label{change-basis}
\left[\begin{array}{c}T_0\\ \cdots\\T_\alpha\\ \cdots\end{array}\right]\,=\,
\D_t\cdot
\left[\begin{array}{c}1\\ \cdots\\ \x^\alpha\\ \cdots\end{array}\right]\,=\,
\D_t\cdot\v_{2t}(\x)\,
\end{equation}
and denote $\D'_t$ the transpose of $\D_t$. The matrix $\D_t$ is nonsingular with positive diagonal.
Then with $\bsig=(\sigma_\alpha)_{\alpha\in\N^n_{2t}}$, \eqref{eq:a2} reads
\begin{equation}
\label{eq:a22}
\bsig\,=\,\D_t\cdot\y\quad\mbox{with}\quad\y=
\displaystyle\int_\B \v_{2t}(\x)\,d\phi(\x).
\end{equation}
That is, $\y=(y_\alpha)_{\alpha\in\N^n_{2t}}$ is required to be 
a moment sequence as it 
has a representing probability measure $\phi\in\mathscr{P}(\B)$.
So in view of Theorem \ref{th-put}, the constraint \eqref{eq:a22} can be relaxed to
\[\bsig=\D_t\cdot\y\quad\mbox{with $y_0=1$ and}
\quad\M_{t-d_j}(g_j\,\y)\,\succeq0\,,\quad j=0,\ldots,m\,.\]
Therefore, consider the following relaxation of \eqref{eq:equiv} 
\begin{equation}
\label{final-relax}
\begin{array}{rl}\rho_{2t}=\displaystyle\inf_{\sigma\in\R[\x]_{2t}}&\{\,\displaystyle\int_\B f(\x)\,(\displaystyle\sum_{\alpha\in\N^n_{2t}}\sigma_\alpha \,T_\alpha(\x))\,d\mu(\x)\::\:\bsig\,=\,\D_t\cdot\y\,;\\
&y_0=1\,;\:\M_{t-d_j}(g_j\,\y)\,\succeq0\,,\quad j=0,\ldots,m\,\}\,.
\end{array}
\end{equation}

\begin{lem}
\label{lem-final}
Let $\B\subset\R^n$ be as in \eqref{set-B} and let $\mu$ be a finite Borel (reference) measure whose support is exactly $\B$ and with
associated sequence of orthonormal polynomials $(T_\alpha)_{\alpha\in\N^n}$. The semidefinite relaxation \eqref{final-relax} of
\eqref{eq:equiv} reads:
\begin{equation}
\label{eq:recognize}
\inf_\y\,\{\,\langle \f,\y\rangle\::\:y_0=1\,;\:\M_{t-d_j}(g_j\,\y)\,\succeq\,0\,,\quad j=0,\ldots,m\,\}\,,
\end{equation}
which is the dual of \eqref{eq:lower}
\end{lem}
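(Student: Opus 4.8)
The plan is to eliminate the auxiliary coefficient vector $\bsig$ from \eqref{final-relax} by means of the change of basis \eqref{change-basis}, which collapses the objective into the linear form $\langle\f,\y\rangle$, and then to recognize the resulting program as the standard order-$t$ moment relaxation, whose Lagrangian dual is \eqref{eq:lower}.

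First I would linearize the objective of \eqref{final-relax}. Expanding $\sigma=\sum_{\vert\alpha\vert\leq 2t}\sigma_\alpha\,T_\alpha$ and using linearity of the integral,
\[\int_\B f\,\sigma\,d\mu=\sum_{\vert\alpha\vert\leq 2t}\sigma_\alpha\int_\B f\,T_\alpha\,d\mu=\langle\hat{\f},\bsig\rangle,\]
where $\hat{\f}=(\hat f_\alpha)$ collects the coefficients $\hat f_\alpha:=\int_\B f\,T_\alpha\,d\mu$ of $f$ in the orthonormal basis $(T_\alpha)$; this expansion is exact because $\deg f\leq 2t$. Next I would relate $\hat{\f}$ to the monomial-coefficient vector $\f$ of $f$: writing $f$ in both bases and using \eqref{change-basis} in the form $T(\x)=\D_t\,\v_{2t}(\x)$ gives $f(\x)=\hat{\f}'\,T(\x)=\hat{\f}'\,\D_t\,\v_{2t}(\x)=\f'\,\v_{2t}(\x)$ for all $\x$, whence $\f=\D_t'\,\hat{\f}$. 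Substituting the feasibility relation $\bsig=\D_t\,\y$ into the linearized objective then produces the desired cancellation,
\[\langle\hat{\f},\bsig\rangle=\langle\hat{\f},\D_t\,\y\rangle=\langle\D_t'\,\hat{\f},\y\rangle=\langle\f,\y\rangle.\]

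Since $\D_t$ is nonsingular (it has positive diagonal), the constraint $\bsig=\D_t\,\y$ is a bijective change of variables that merely defines $\bsig$ in terms of $\y$; it therefore carries no further information once the objective has been expressed through $\y$, and $\bsig$ can be dropped from the program. The surviving constraints are exactly $y_0=1$ and $\M_{t-d_j}(g_j\,\y)\succeq0$ for $j=0,\ldots,m$, so \eqref{final-relax} is identical to \eqref{eq:recognize}.

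Finally it remains to identify \eqref{eq:recognize} as the dual of the SOS program \eqref{eq:lower}. This is the classical moment-SOS duality: \eqref{eq:recognize} is the standard order-$t$ moment relaxation of the POP, and pairing its scalar constraint $y_0=1$ with the multiplier $\lambda$ and its semidefinite constraints $\M_{t-d_j}(g_j\,\y)\succeq0$ with SOS multipliers $\psi_j\in\Sigma[\x]_{t-d_j}$ returns precisely \eqref{eq:lower}; here I would simply invoke the standard derivation in \cite{lass-siopt-01,lass-book}. The only point requiring genuine care is that the cost vector appearing in \eqref{eq:recognize} is truly $\f$, the \emph{monomial} coefficients of $f$, and not the orthonormal coefficients $\hat{\f}$; this is exactly what the identity $\f=\D_t'\,\hat{\f}$ secures, and it is what makes the reference measure $\mu$ disappear from \eqref{eq:recognize}. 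Accordingly, the main (though essentially routine) obstacle is bookkeeping the transposes so that $\bsig=\D_t\,\y$ and $\f=\D_t'\,\hat{\f}$ combine to cancel $\D_t$ cleanly.
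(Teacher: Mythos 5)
Your proposal is correct and follows essentially the same route as the paper's proof: expand $f$ in the orthonormal basis, use orthonormality to write the objective as $\langle\tilde{\f},\bsig\rangle$, relate the two coefficient vectors through $\D_t$ (you write $\f=\D'_t\,\tilde{\f}$ where the paper writes $\tilde{\f}=(\D'_t)^{-1}\f$, which is the same identity), substitute $\bsig=\D_t\,\y$ to obtain $\langle\f,\y\rangle$, and invoke the standard moment-SOS duality of \cite{lass-siopt-01,lass-book}. Your explicit remark that the bijective constraint $\bsig=\D_t\,\y$ can then be eliminated is a welcome clarification of a step the paper leaves implicit.
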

\begin{proof}
With $f(\x)=\sum_\alpha f_\alpha\,\x^\alpha =\langle\f,\v_{2t}(\x)\rangle$, write $f(\x)=\sum_{\alpha\in\N^n_{2t}}\tilde{f}_\alpha \,T_\alpha(\x)$ in the basis $(T_\alpha)_{\alpha\in\N^n_{2t}}$. Then
 with $\tilde{\mathbf{f}}=(\tilde{f}_\alpha)$ one obtains
\[\langle \tilde{\f},\D_t\cdot\v_{2t}(\x)\rangle \,=\,\langle\D'_t\,\tilde{\f},\v_{2t}(\x)\rangle
\,=\,\langle\f,\v_{2t}(\x)\rangle
\Rightarrow \tilde{\f}=(\D'_t)^{-1}\f\,.\]
Finally,  as the $T_\alpha$'s form an orthonormal basis, the criterion
\[\int_\B f(\x)\,(\displaystyle\sum_{\alpha\in\N^n_{2t}}\sigma_\alpha\,T_\alpha(\x))\,d\mu(\x)\,\]
to minimize in \eqref{final-relax} reads:
\[\int_\B f(\x)\,(\displaystyle\sum_{\alpha\in\N^n_{2t}}\sigma_\alpha\,T_\alpha(\x))\,d\mu(\x)\,=\,\langle \tilde{\f},\bsig\rangle\,=\,
\langle(\D'_t)^{-1}\f,\D_t\,\y\rangle\,=\,\langle \f,\y\rangle\,,\]
which yields that \eqref{final-relax} is exactly \eqref{eq:recognize}. Next, that \eqref{eq:recognize} is a dual of \eqref{eq:lower} is 
a standard result in polynomial optimization \cite{lass-siopt-01,lass-book}.  
\end{proof}
Of course by reverting the process of the above proof, the semidefinite program \eqref{eq:recognize}
can be transformed to \eqref{final-relax} once a reference measure $\mu$ with support exactly $\B$
is defined with its associated orthonormal polynomials $(T_\alpha)$. Indeed, once
$\mu$ and the $T_\alpha$'s are defined, one may use the change of basis matrix 
$\D$ in \eqref{change-basis} to pass from \eqref{eq:recognize} to \eqref{final-relax}.

\begin{cor}
Let $\B\subset\R^n$ be as in \eqref{set-B} and let $\mu$ be a finite Borel (reference) measure whose support is exactly $\B$ and with
associated sequence of orthonormal polynomials $(T_\alpha)_{\alpha\in\N^n}$. Let $f^*$
be the global minimum of $f$ on $\B$.

Let $t$ be such that  the semidefinite relaxation \eqref{final-relax} (or equivalently \eqref{eq:recognize}) is exact, i.e., if $\rho_{2t}=f^*$.
If an optimal solution $\y^*$ of \eqref{eq:recognize} has a representing measure $\phi^*\in\mathscr{M}(\B)_+$, then 
an optimal polynomial density $\sigma^*\in\R[\x]_{2t}$ of \eqref{final-relax} satisfies:
\[\sigma^*(\bxi)\,=\,\sum_{\alpha\in\N^n_{2t}} T_\alpha(\bxi)^2\,=\,K_{2t}(\bxi,\bxi)\,,\quad\forall \bxi\in{\rm supp}(\phi^*)\,,\]
that is, $\sigma^*(\bxi)^{-1}$ is the Christoffel function evaluated at the global minimizer $\bxi\in\B$.
\end{cor}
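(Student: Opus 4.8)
The plan is to transport the optimal moment vector $\y^*$ through the change of basis, then use exactness to localize the support of $\phi^*$ on the global minimizers, and finally to collapse the resulting kernel integral onto its diagonal.

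First I would rewrite the optimal density $\sigma^*$ directly in terms of $\phi^*$. Since $\y^*$ is represented by $\phi^*$, one has $\y^*=\int_\B \v_{2t}(\x)\,d\phi^*(\x)$; applying $\D_t$ and invoking \eqref{change-basis} gives $\bsig^*=\D_t\y^*=\int_\B \D_t\,\v_{2t}(\x)\,d\phi^*(\x)$, whence $\sigma^*_\alpha=\int_\B T_\alpha\,d\phi^*$ for every $\alpha\in\N^n_{2t}$. Substituting into \eqref{eq:a1} and interchanging the finite sum with the integral yields the compact form $\sigma^*(\x)=\int_\B K_{2t}(\x,\y)\,d\phi^*(\y)$ for all $\x\in\B$ (this depends only on the moments up to degree $2t$, hence is independent of the chosen representing measure).

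Next I would pin down ${\rm supp}(\phi^*)$ using the exactness hypothesis. By Lemma \ref{lem-final} the optimal value of \eqref{final-relax} equals $\langle\f,\y^*\rangle=\int_\B f\,d\phi^*$, and by assumption it equals $f^*$. As $\phi^*$ is a probability measure on $\B$ (because $y^*_0=1$) and $f-f^*\geq0$ on $\B$, the identity $\int_\B (f-f^*)\,d\phi^*=0$ forces $f=f^*$ $\phi^*$-almost everywhere; by continuity of $f$, every $\bxi\in{\rm supp}(\phi^*)$ is then a global minimizer.

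Finally I would evaluate $\sigma^*$ at such a $\bxi$. In the decisive situation where $\phi^*=\delta_{\bxi}$ is a single Dirac --- the ``ideal'' case \eqref{ideally} anticipated in the introduction, and the generic one when the minimizer is unique --- the first step gives $\sigma^*_\alpha=T_\alpha(\bxi)$, so $\sigma^*(\x)=K_{2t}(\bxi,\x)$ and in particular $\sigma^*(\bxi)=\sum_\alpha T_\alpha(\bxi)^2=K_{2t}(\bxi,\bxi)$, with $K_{2t}(\bxi,\bxi)^{-1}$ the Christoffel function of $\mu$ at $\bxi$. The main obstacle is precisely this last collapse: the first step only delivers $\sigma^*(\bxi)=\int_\B K_{2t}(\bxi,\y)\,d\phi^*(\y)$, and reducing it to the diagonal value $K_{2t}(\bxi,\bxi)$ genuinely needs the mass of $\phi^*$ to concentrate at $\bxi$. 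When $\phi^*$ is carried by a single minimizer this is automatic; if several global minimizers coexist the off-diagonal terms $K_{2t}(\bxi,\y)$ with $\y\neq\bxi$ need not vanish (Cauchy--Schwarz gives only $K_{2t}(\bxi,\y)^2\leq K_{2t}(\bxi,\bxi)\,K_{2t}(\y,\y)$), so the cleanest reading takes $\phi^*$ to be an extremal, single-atom representing measure, equivalently one extracts via flat extension a Dirac solution attached to each minimizer separately.
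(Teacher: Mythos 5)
Your argument reaches the right conclusion in the single-atom case and, importantly, you have put your finger on the real subtlety; but your main line of attack differs from the paper's in one decisive respect, and that difference is exactly what makes the paper's proof work for an arbitrary representing measure $\phi^*$. You build the density from $\y^*$ itself, obtaining $\sigma^*(\x)=\int_\B K_{2t}(\x,\y)\,d\phi^*(\y)$, and then you correctly observe that collapsing this to the diagonal value $K_{2t}(\bxi,\bxi)$ at a support point $\bxi$ fails in general when $\phi^*$ has several atoms, because the off-diagonal values $K_{2t}(\bxi,\y)$ need not vanish. The paper never evaluates the density attached to $\y^*$. Instead it reads ``an optimal polynomial density'' existentially, per support point: since every $\bxi\in{\rm supp}(\phi^*)$ is a global minimizer (a fact you prove directly via $\int_\B (f-f^*)\,d\phi^*=0$, whereas the paper cites it from the literature), the Dirac moment vector $\hat{\y}:=\v_{2t}(\bxi)$ is itself feasible for \eqref{eq:recognize} --- it has representing measure $\delta_{\bxi}$ supported on $\B$, so $\hat{y}_0=1$ and all localizing matrices are positive semidefinite --- with objective value $\langle\f,\hat{\y}\rangle=f(\bxi)=f^*=\rho_{2t}$, hence optimal. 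The density of \eqref{final-relax} associated with \emph{this} optimal solution is $\bsig^*=\D_t\cdot\v_{2t}(\bxi)=(T_\alpha(\bxi))_{\alpha}$ by \eqref{change-basis}, i.e.\ $\sigma^*=K_{2t}(\bxi,\cdot)$, and the conclusion follows for every $\bxi\in{\rm supp}(\phi^*)$, with a different optimal density attached to each $\bxi$.

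So your closing remark --- that one should extract ``a Dirac solution attached to each minimizer separately'' --- is precisely the paper's proof, but you present it as a fallback requiring extremality of $\phi^*$ or flat extension, which is more machinery than is needed: no flat extension or extremality argument is involved, only the elementary observation that the moment vector of $\delta_{\bxi}$ is feasible and optimal. With that single substitution (work with $\hat{\y}=\v_{2t}(\bxi)$ rather than with $\y^*$), your first two steps already constitute a complete and correct proof of the statement as the paper intends it.
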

\begin{proof}
If $\y^*$ has a representing measure $\phi^*\in\mathscr{M}(\B)_+$ then necessarily
$f(\bxi)=f^*$ for all $\bxi\in{\rm sup}(\phi^*)$; see e.g. \cite{lass-siopt-01,lass-book}. In particular,
for every $\bxi\in{\rm sup}(\phi^*)$, the vector $\hat{\y}:=(\bxi^\alpha)_{\alpha\in\N^n_{2t}}$ is also an optimal solution of \eqref{eq:recognize}. Then
\[\bsig^*\,=\,\D_t\cdot\hat{\y}=\D_t\cdot\v_{2t}(\bxi)\,=\,
\left[\begin{array}{c}T_0(\bxi)\\ \cdots\\T_\alpha(\bxi)\\ \cdots\end{array}\right]\,,\]
i.e., $\sigma^*_\alpha=T_\alpha(\bxi)$ for all $\alpha\in\N^n_{2t}$. Therefore,
\[\x\mapsto \sigma^*(\x)\,=\,\sum_{\alpha\in\N^n_{2t}}T_\alpha(\bxi)\,T_\alpha(\x)\,=\,\K_{2t}(\bxi,\x)\,,\]
and so $\sigma^*(\bxi)=\K_{2t}(\bxi,\bxi)$. In other words, $\sigma^*(\bxi)^{-1}$ is the 
Christoffel function associated with $\mu$, evaluated at $\bxi\in\B$.
\end{proof}
\subsection*{Discussion} Observe that the formulation \eqref{final-relax} does not require
that the set $\B$ is a ``simple" set as it is required in \eqref{eq:upper}. Indeed the 
orthonormal polynomials $(T_\alpha)$ are only used to provide an \emph{interpretation} of the hierarchy of lower bounds \eqref{eq:recognize} (and its dual \eqref{eq:lower}).
On the other hand, for the hierarchy of upper bounds \eqref{eq:upper}, $\B$ indeed needs to be a ``simple" set for 
computational purposes. This is because one needs the numerical value of the moments of $\mu$ 
for a practical implementation of  \eqref{eq:upper}.

Lemma \ref{lem-final} shows that the Moment-SOS hierarchy described in \cite{lass-siopt-01,lass-book} amounts to compute
a hierarchy of signed polynomial densities with respect to some reference measure $\mu$ with support exactly $\B$. When the step-$t$ relaxation is exact (which takes place generically \cite{nie}) the resulting optimal density $\sigma$ in \eqref{final-relax}
is nothing less than the polynomial $\x\mapsto K_t(\bxi,\x)$ where $\bxi$ is a global minimizer of $f$ on $\B$, $K_t(\bxi,\x)$ is
the celebrated Cristoffel-Darboux kernel in approximation theory, and $\sigma(\bxi,\bxi)$ is the reciprocal of the Christoffel function 
evaluated at a global minimizer $\bxi$.

\section{Conclusion}

We have shown that the Moment-SOS hierarchy that provides an increasing sequence of lower bounds on the global minimum
of a polynomial $f$ on a compact set $\B$, has a simple interpretation related to orthogonal polynomials
associated with an arbitrary reference measure whose support is exactly $\B$.
This interpretation strongly relates  polynomial optimization (here the Moment-SOS hierarchy) with
the Christoffel-Darboux kernel and  the Christoffel function, 
fundamental tools in the theory of orthogonal polynomials and the theory of of approximation. 

 It is another item in the list of previous contributions  \cite{lass-2020,etienne-monique-2020,slot-monique-2020,slot-monique-2020-2} 
 that also link some issues in polynomial optimization with orthogonal polynomials associated with appropriate measures. We hope that such connections 
 will stimulate even further investigations in this direction.

\end{document}